\makeatletter\@removefromreset{footnote}{chapter}\makeatother
\newcommand{\F}{\mathbb{F}}
\newcommand{\vek}[1]{\mathbf{#1}}
\newcommand{\qbinom}[3]{\genfrac{[}{]}{0pt}{}{#1}{#2}_{#3}}
\newcommand{\qnumb}[2]{[#1]_{#2}}
\DeclareMathOperator{\PG}{PG}
\DeclareMathOperator{\Der}{Der}
\def\theorem@checkbold{}
\newtheorem{theorem}{Theorem}
\newtheorem{lemma}{Lemma}[section]
\theoremstyle{definition}
\newtheorem{definition}[lemma]{Definition}
\theoremstyle{remark}
\newtheorem{remark}[lemma]{Remark}
\title{On $\alpha$-points of $q$-analogs of the Fano plane}
\author{
Michael Kiermaier
\thanks{
Universität Bayreuth, Mathematisches Institut, D-95440 Bayreuth, Germany
\newline
email:~\texttt{michael.kiermaier@uni-bayreuth.de}
\newline
homepage:~\url{http://www.mathe2.uni-bayreuth.de/michaelk/}
}\\
\emph{Universität Bayreuth}
}
\begin{document}
\maketitle
\begin{abstract}
Arguably, the most important open problem in the theory of $q$-analogs of designs is the question for the existence of a $q$-analog $D$ of the Fano plane.
It is undecided for every single prime power value $q \geq 2$.

A point $P$ is called an $\alpha$-point of $D$ if the derived design of $D$ in $P$ is a geometric spread.
In 1996, Simon Thomas has shown that there must always exist at least one non-$\alpha$-point.
For the binary case $q = 2$, Olof Heden and Papa Sissokho have improved this result in 2016 by showing that the non-$\alpha$-points must form a blocking set with respect to the hyperplanes.

In this article, we show that a hyperplane consisting only of $\alpha$-points implies the existence of a partiton of the symplectic generalized quadrangle $W(q)$ into spreads.
As a consequence, the statement of Heden and Sissokho is generalized to all primes $q$ and all even values of $q$.
\end{abstract}

\section{Introduction}
Due to the connection to network coding, the theory of subspace designs has gained a lot of interest recently.
Subspace designs are the $q$-analogs of combinatorial designs and arise by replacing the subset lattice of the finite ambient set $V$ by the subspace lattice of a finite ambient vector space $V$.
Arguably the most important open problem in this field is the question for the existence of a $q$-analog of the Fano plane, which is a subspace design with the parameters $2$-$(7,3,1)_q$.
This problem has already been stated in 1972 by Ray-Chaudhuri~\cite[Problem~28]{Berge-RayChaudhuri-1974-LNMa411:278-287}.
Despite considerable investigations, its existence remains undecided for every single order $q$ of the base field.

It has been shown that the smallest instance $q=2$, the binary $q$-analog of a Fano plane, can have at most a single nontrivial automorphism \cite{Braun-Kiermaier-Nakic-2016-EuJC51:443-457,Kiermaier-Kurz-Wassermann-2018-DCC86[2]:239-250}.
A $q$-analog of the Fano plane would be the largest possible $[7,4;3]_q$ constant dimension subspace code.
However, the hitherto best known sizes of such constant dimension subspace codes still leave considerable gaps, namely $333$ vs. $381$ in the binary case \cite{Heinlein-Kiermaier-Kurz-Wassermann-2019-AiMoC13[3]:457-475} and $6978$ vs. $7651$ in the ternary case \cite{Honold-Kiermaier-2016}.%
\footnote{The $[7,4;3]_3$ code of size $6977$ in \cite{Honold-Kiermaier-2016} can be extended trivially by a further codeword.}

Another approach has been the investigation of the derived designs of a putative $q$-analog $D$ of the Fano plane.
A derived design exists for each point $P\in\PG(6,q)$ and is always a $q$-design with the parameters $1$-$(6,2,1)_q$, which is the same as a line spread of $\PG(5,q)$.
Following the notation of \cite{Heden-Sissokho-2016-ArsComb124:161-164}, a point $P$ is called an \emph{$\alpha$}-point of $D$ if the derived design in $P$ is the geometric spread, which is the most symmetric and natural one among the $131044$ isomorphism types \cite{Mateva-Topalova-2009-JCD17[1]:90-102} of such spreads.
For highest possible regularity, one would expect all points to be $\alpha$-points.
However, this has been shown to be impossible, so there must always be at least one non-$\alpha$-point of $D$ \cite{Thomas-1996-GeomDed63[3]:247-253}.
For the binary case $q=2$, this result has been improved to the statement that each hyperplane contains at least a non-$\alpha$-point {Heden-Sissokho-2016-ArsComb124:161-164},
In other words, the non-$\alpha$-points of of a binary $q$-analog of the Fano plane form a blocking set with respect to the hyperplanes.

In this article, $\alpha$-points will be investigated for general values of $q$, which will lead to the following theorem.
\begin{theorem}
	\label{thm:partition}
	Let $D$ be a $q$-analog of the Fano plane and assume that there exists a hyperplane $H$ such that all points of $H$ are $\alpha$-points of $D$.
	Then the following equivalent statements hold:
	\begin{enumerate}[(a)]
		\item\label{thm:partition:w} The line set of the symplectic generalized quadrangle $W(q)$ is partitionable into spreads.
		\item\label{thm:partition:q} The point set of the parabolic quadric $Q(4,q)$ is partitionable into ovoids.
	\end{enumerate}
\end{theorem}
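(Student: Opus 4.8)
The plan is to extract the desired partition from the way $D$ meets the hyperplane $H$, using the $\alpha$-condition at \emph{every} point of $H$ to pin down the local geometry. Write $V=\F_q^7$, regard $H$ as a $6$-dimensional subspace (a $\PG(5,q)$), and let $\mathcal{B}_H$ denote the set of blocks of $D$ contained in $H$. First I would settle the numerology. For a point $P\le H$ the blocks of $D$ through $P$ form a line spread of $V/P\cong\PG(5,q)$, and the blocks contained in $H$ are exactly those whose image lies in the hyperplane $H/P$; since a hyperplane of $\PG(5,q)$ contains exactly $q^2+1$ lines of any line spread (a short counting argument that does not use the $\alpha$-hypothesis), every point of $H$ lies on exactly $q^2+1$ members of $\mathcal{B}_H$, whence $|\mathcal{B}_H|=(q^2+1)(q^3+1)$. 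As two blocks of $D$ meet in at most a point, these $q^2+1$ blocks through $P$ are mutually skew modulo $P$.

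Next I would invoke the hypothesis at a point $P\le H$. Being geometric, the derived spread $S_P$ of $V/P$ identifies with the point set of $\PG(2,q^2)$ under a suitable field reduction $V/P\cong\F_{q^2}^3$; the $q^2+1$ blocks of $\mathcal{B}_H$ through $P$ then correspond to the $q^2+1$ points of the unique line $\ell_P$ of $\PG(2,q^2)$ that underlies the hyperplane $H/P$, so their union is the $\PG(3,q)$ obtained by field reduction of $\ell_P$, on which they induce a \emph{regular} (Desarguesian) line spread $R_P$. Lifting back, the span $\Pi_P\le H$ of these blocks is $5$-dimensional, contains $P$, and $\Pi_P/P\cong\PG(3,q)$ carries the regular spread $R_P$. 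A regular spread of $\PG(3,q)$ is symplectic, so $R_P$ sits inside a copy of $W(q)$ on $\Pi_P/P$ as one of its spreads, and one checks that, of the $(q+1)(q^2+1)$ totally isotropic lines of this $W(q)$, exactly the $q^2+1$ members of $R_P$ are images of blocks of $D$. I would also record at the outset that the equivalence of (a) and (b) is the classical point--line duality between $W(q)$ and $Q(4,q)$, under which spreads of the former correspond to ovoids of the latter and hence packings of $W(q)$ into spreads to partitions of $Q(4,q)$ into ovoids; it therefore suffices to establish (a) for this particular $W(q)$.

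So the task reduces to producing $q$ further spreads of $W(q)=W(\Pi_P/P)$, pairwise disjoint and disjoint from $R_P$, whose union with $R_P$ exhausts all totally isotropic lines --- equivalently, to partitioning the totally isotropic planes of $\Pi_P$ through $P$ into $q+1$ spreads. This is the heart of the proof and, I expect, the main obstacle. These extra spreads cannot be built inside $\Pi_P$ alone, since only $q^2+1$ of the relevant planes are blocks; instead one must bring in $D$ on the rest of $H$, and it is here that the hypothesis ``all of $H$'' (not merely the single point $P$) is essential. The mechanism I would pursue: for a totally isotropic plane $\pi\le\Pi_P$ through $P$ that is not a block, the blocks of $D$ carrying the lines of $\pi$ not through $P$ lie outside $H$, and their traces on $H$ --- read against the geometric spreads $S_Q$ at the other points $Q\le H$ and the mutual compatibility of the associated $\PG(2,q^2)$-models --- should single out a well-defined ``direction'', giving a parallelism on these planes. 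The crux is then to prove that there are exactly $q+1$ parallel classes and that each is a spread, i.e.\ that its $q^2+1$ totally isotropic lines are mutually skew and cover $\Pi_P/P$; establishing this globally, rather than verifying it point by point, is the part I anticipate will demand the most care.
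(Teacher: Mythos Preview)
Your preliminary work is sound: the count $|\mathcal{B}_H|=(q^2+1)(q^3+1)$, the identification of $\Pi_P$ as a $5$-flat, the fact that the blocks through $P$ in $H$ induce a regular spread $R_P$ of $\Pi_P/P$, and the reduction of (a)$\Leftrightarrow$(b) to the $W(q)$--$Q(4,q)$ duality are all correct. But the step you yourself flag as ``the heart of the proof'' is not carried out, and the mechanism you sketch for it is problematic. You propose to manufacture the remaining $q$ spreads from blocks lying \emph{outside} $H$, asserting that for a non-block totally isotropic plane $\pi\le\Pi_P$ through $P$ the blocks through the lines of $\pi$ off $P$ all leave $H$. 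This assertion is unproved, and in any case the suggested ``direction'' labelling via traces on $H$ and compatibility of the $\PG(2,q^2)$-models at neighbouring points is left entirely heuristic. Nothing in your outline forces the $q(q^2+1)$ remaining totally isotropic lines to fall into $q$ pairwise skew families of size $q^2+1$; that is precisely the content of the theorem, and it cannot be read off from the single regular spread $R_P$.

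The paper sidesteps this asymmetry by choosing a different $\PG(3,q)$ inside $H$: not a quotient $\Pi_P/P$, but a \emph{poor solid} $S\le H$ (a $4$-subspace containing no block; these exist in every $\beta$-flat). The $q+1$ spreads then arise uniformly from the $q+1$ intermediate $5$-flats $S<F<H$: each such $F$ is a $\beta$-flat (since every point of $H$ is an $\alpha$-point), and its $q^2+1$ blocks cut $S$ in a line spread $\mathcal{L}_F$. The Steiner property makes the $\mathcal{L}_F$ pairwise disjoint, so one has the packing into $q+1$ spreads before knowing what the ambient quadrangle is. Only afterwards does one verify, via a line-pencil lemma at each point of $S$ and the Buekenhout--Lef\`evre classification of embedded quadrangles, that $\bigcup_F \mathcal{L}_F$ is the line set of a $W(q)$ on $S$. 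Everything happens with blocks \emph{inside} $H$. If you want to save your viewpoint, the fix is to dualise within $H$: replace the fixed point $P$ by a fixed poor solid and let the $q+1$ hyperplanes of $H$ above it supply the spreads symmetrically.
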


As a consequence, we will get the following generalization of the result of \cite{Heden-Sissokho-2016-ArsComb124:161-164}.

\begin{theorem}
	\label{thm:alpha_points_block}
	Let $D$ be a $q$-analog of the Fano plane and $q$ be prime or even.
	Then each hyperplane contains a non-$\alpha$-point.
	In other words, the non-$\alpha$-points form a blocking set with respect to the hyperplanes.
\end{theorem}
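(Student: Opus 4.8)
The plan is to obtain Theorem~\ref{thm:alpha_points_block} as a direct consequence of Theorem~\ref{thm:partition}. Suppose, for a contradiction, that some hyperplane $H$ of $\PG(6,q)$ consists only of $\alpha$-points of $D$. Then Theorem~\ref{thm:partition} applies, so the point set of the parabolic quadric $Q(4,q)$ is partitionable into ovoids. The number of parts is forced by a count: $Q(4,q)$ has $(q+1)(q^2+1)$ points and every ovoid has $q^2+1$ points, so there are exactly $q+1$ ovoids $O_1,\dots,O_{q+1}$, pairwise disjoint and covering $Q(4,q)$. It therefore suffices to rule out such a partition whenever $q$ is prime or even, and I would treat these two cases by entirely different arguments.

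For $q$ even I would pass to the symplectic model: $Q(4,q)\cong W(q)$, and it is a classical fact (which I would reference precisely in the write-up) that an ovoid of $W(q)$ is an ovoid of $\PG(3,q)$ whose tangent lines are exactly the totally isotropic lines of the underlying symplectic form. Transporting the partition, one obtains $q+1$ ovoids $O_1,\dots,O_{q+1}$ of $\PG(3,q)$ that partition \emph{all} of its points. Now fix any non-totally-isotropic line $\ell$ of $\PG(3,q)$ (such lines exist for every $q\geq 2$). Each $O_i$ is a cap, so $\lvert\ell\cap O_i\rvert\leq 2$; and $\lvert\ell\cap O_i\rvert=1$ would force $\ell$ to be a tangent line of $O_i$, hence totally isotropic, which it is not. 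Thus $\lvert\ell\cap O_i\rvert\in\{0,2\}$ for every $i$, and summing over $i$ gives $q+1=\lvert\ell\rvert=\sum_{i}\lvert\ell\cap O_i\rvert$, an even number — contradicting that $q$ is even. This argument is completely insensitive to the isomorphism type of the ovoids, so it covers the Tits ovoids occurring for $q$ an odd power of $2$ as well.

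For $q$ an odd prime I would instead invoke the classification of ovoids of $Q(4,q)$: when $q$ is prime, every ovoid of $Q(4,q)$ is classical, that is, of the form $Q(4,q)\cap N^{\perp}$ for a non-singular point $N$ whose polar hyperplane $N^{\perp}$ meets $Q(4,q)$ in an elliptic quadric $Q^-(3,q)$. Writing $O_i=Q(4,q)\cap N_i^{\perp}$ and taking $i\neq j$, the intersection $N_i^{\perp}\cap N_j^{\perp}=\langle N_i,N_j\rangle^{\perp}$ is a plane of $\PG(4,q)$. But every plane of $\PG(4,q)$ contains a point of $Q(4,q)$, since a quadratic form in three variables over a finite field is isotropic; hence $O_i\cap O_j\neq\emptyset$, contradicting disjointness. (This last observation in fact shows that two distinct classical ovoids of $Q(4,q)$ always meet, for every $q$.)

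In both cases we reach a contradiction, so $H$ cannot consist only of $\alpha$-points, which proves Theorem~\ref{thm:alpha_points_block}. The only genuinely delicate ingredient is the scope of the inputs rather than the arguments themselves: for $q$ even everything rests on standard structural facts about $W(q)$ together with the parity of $q+1$, whereas for $q$ odd the entire weight falls on the classification of ovoids of $Q(4,q)$, which is established for $q$ prime (and a few further prime powers) but open in general — this is precisely why the theorem is phrased for $q$ prime or even rather than for all $q$. In the write-up I would also state explicitly the classical facts being used, namely that $Q(4,q)\cong W(q)$ and that ovoids of $W(q)$ are ovoids of $\PG(3,q)$ when $q$ is even, and that every plane of $\PG(4,q)$ meets $Q(4,q)$.
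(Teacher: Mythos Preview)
Your proposal is correct and follows the same overall strategy as the paper: assume a hyperplane of $\alpha$-points, invoke Theorem~\ref{thm:partition} to obtain a partition of $Q(4,q)$ into ovoids, and then rule this out for $q$ prime or even. For $q$ an odd prime, your argument is essentially identical to the paper's (both rest on the Ball--Govaerts--Storme classification of ovoids of $Q(4,q)$ as elliptic quadrics), with the added detail that two elliptic hyperplane sections meet because every plane of $\PG(4,q)$ carries a singular point; this is a welcome unpacking of what the paper leaves implicit.

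The only genuine divergence is in the even case. The paper simply cites \cite[3.4.1~(i)]{Payne-Thas-2009-FiniteGeneralizedQuadrangles2nd} for the non-existence of an ovoid partition of $Q(4,q)$, whereas you pass to $W(q)\cong Q(4,q)$ (valid precisely for $q$ even), use the standard fact that ovoids of $W(q)$ are ovoids of $\PG(3,q)$ whose tangent lines coincide with the totally isotropic lines, and then run a clean parity count on a non-isotropic line. This is a self-contained and transparent argument that avoids a black-box citation, at the cost of importing two classical facts about $W(q)$ for even $q$; both approaches are short, and yours has the advantage of making the obstruction (the parity of $q+1$) visible.
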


\section{Preliminaries}
Throughout the article, $q \neq 1$ is a prime power and $V$ is a vector space over $\F_q$ of finite dimension $v$.

\subsection{The subspace lattice}
For simplicity, a subspace $U$ of $V$ of dimension $\dim_{\F_q}(U) = k$ will be called a \emph{$k$-subspace}.
The set of all $k$-subspaces of $V$ is called the \emph{Graßmannian} and will be denoted by $\qbinom{V}{k}{q}$.
Picking the \enquote{best of two worlds}, we will prefer the algebraic dimension $\dim_{\F_q}(U)$ over the geometric dimension $\dim_{\F_q}(U) - 1$, but we will otherwise make heavy use of geometric notions, such as calling the $1$-subspaces of $V$ \emph{points}, the $2$-subspaces \emph{lines}, the $3$-subspaces \emph{planes}, the $4$-subspaces \emph{solids} and the $(v-1)$-subspaces \emph{hyperplanes}.
In fact, the \emph{subspace lattice} $\mathcal{L}(V)$ consisting of all subspaces of $V$ ordered by inclusion is nothing else than the finite projective geometry $\PG(v-1,q) = \PG(V)$.%
\footnote{In established symbols like $\PG(v-1,q)$, the geometric dimension $v-1$ is not altered.}
There are good reasons to consider the subset lattice as a subspace lattice over the unary \enquote{field} $\F_1$ \cite{Cohn-2004-AMM111[6]:487-495}.

The number of all $k$-subspaces of $V$ is given by the Gaussian binomial coefficient
\[
\#\qbinom{V}{k}{q}
= \qbinom{v}{k}{q}
= \begin{cases}
	\frac{(q^v-1)\cdots(q^{v-k+1}-1)}{(q^k-1)\cdots(q-1)} & \text{if } k\in\{0,\ldots,v\}\text{;}\\
	0 & \text{otherwise.}
\end{cases}
\]
The Gaussian binomial coefficient $\qbinom{v}{1}{q}$ is also known as the $q$-analog of the number $v$ and will be abbreviated as $\qnumb{v}{q}$.

For $S \subseteq \mathcal{L}(V)$ and $U,W\in\mathcal{L}(V)$, we will use the abbreviations
\begin{align*}
S|_U & = \{B\in S \mid U \leq B\}\text{,} \\
S|^W & = \{B\in S\mid B\leq W\}\quad\text{and} \\
S|_U^W & = \{B\in S\mid U \leq B\leq W\}\text{.}
\end{align*}
For a point $P$ in a plane $E$, the set of all lines in $E$ passing through $P$ is known as a \emph{line pencil}.

The subspace lattice $\mathcal{L}(V)$ is isomorphic to its dual, which arises from $\mathcal{L}(V)$ by reversing the order.
Fixing a non-degenerate bilinear form $\beta$ on $V$, a concrete isomorphism is given by $U \mapsto U^\perp$, where $U^\perp = \{\vek{x}\in V \mid \beta(\vek{x},\vek{u}) = 0\text{ for all }\vek{u}\in U\}$.
When addressing the dual of some geometric object in $\PG(V)$, we mean its (element-wise) image under this map.
Up to isomorphism, the image does not depend on the choice of $\beta$.

\subsection{Subspace designs}
\label{subsect:designs}
\begin{definition}
Let $t,v,k$ be integers with $0 \leq t \leq k\leq v-t$ and $\lambda$ another positive integer.
A set $D \subseteq\qbinom{V}{k}{q}$ is called a $t$-$(v,k,\lambda)_q$ \emph{subspace design} if each $t$-subspace of $V$ is contained in exactly $\lambda$ elements (called \emph{blocks}) of $D$.
In the important case $\lambda = 1$, $D$ is called a \emph{$q$-Steiner system}.
\end{definition}

The earliest reference for subspace designs is \cite{Cameron-1974}.
However, the idea is older, since it is stated that ``Several people have observed that the concept of a $t$-design can be generalised [...]''.
They have also been mentioned in a more general context in \cite{Delsarte-1976-JCTSA20[2]:230-243}.
The first nontrivial subspace designs with $t \geq 2$ has been constructed in \cite{Thomas-1987-GeomDed24[2]:237-242}, and the first nontrivial Steiner system with $t \geq 2$ in \cite{Braun-Etzion-Ostergard-Vardy-Wassermann-2016-ForumMathPi:e7}.
An introduction to the theory of subspace designs can be found at \cite{Braun-Kiermaier-Wassermann-2018-COST1}, see also \cite[Day~4]{Suzuki-1989-Designs}.

Subspace designs are interlinked to the theory of network coding in various ways.
To this effect we mention the recently found $q$-analog of the theorem of Assmus and Mattson \cite{Byrne-Ravagnani-SIAMJDM33[3]:1242-1260}, and that a $t$-$(v,k,1)_q$ Steiner systems provides a $(v,2(k-t+1);k)_q$ constant dimension network code of maximum possible size.

Classical combinatorial designs can be seen as the limit case $q=1$ of subspace designs.
Indeed, quite a few statements about combinatorial designs have a generalization to subspace designs, such that the case $q = 1$ reproduces the original statement \cite{Kiermaier-Laue-2015-AiMoC9[1]:105-115, Kiermaier-Pavcevic-2015-JCD23[11]:463-480, Nakic-Pavcevic-2015-DCC77[1]:49-60, Braun-Kiermaier-Kohnert-Laue-2017-JCTSA147:155-185}.

One example of such a statement is the following \cite[Lemma~4.1(1)]{Suzuki-1990-EuJC11[6]:601-607}, see also \cite[Lemma~3.6]{Kiermaier-Laue-2015-AiMoC9[1]:105-115}:
If $D$ is a $t$-$(v, k, \lambda)_q$ subspace design, then $D$ is also an $s$-$(v,k,\lambda_s)_q$ subspace design for all $s\in\{0,\ldots,t\}$, where
\[
	\lambda_s \coloneqq \lambda \frac{\qbinom{v-s}{t-s}{q}}{\qbinom{k-s}{t-s}{q}}.
\]
In particular, the number of blocks in $D$ equals
\[
\#D = \lambda_0 = \lambda \frac{\qbinom{v}{t}{q}}{\qbinom{k}{t}{q}}.
\]
So, for a design with parameters $t$-$(v, k, \lambda)_q$, the numbers $\lambda_s$ necessarily are integers for all $s\in\{0,\ldots,t\}$ (\emph{integrality conditions}).
In this case, the parameter set $t$-$(v,k,\lambda)_q$ is called \emph{admissible}.
It is further called \emph{realizable} if a $t$-$(v,k,\lambda)_q$ design actually exists.
The smallest admissible parameters of a $q$-analog of a Steiner system are $2$-$(7,3,1)_q$, which are the parameters of the $q$-analog of the Fano plane.
This explains the significance of the question of its realizability.

The numbers $\lambda_i$ can be refined as follows.
Let $i,j$ be non-negative integers with $i + j \leq t$ and let $I\in\qbinom{V}{i}{q}$ and $J\in\qbinom{V}{v-j}{q}$.
By \cite[Lemma~4.1]{Suzuki-1990-EuJC11[6]:601-607}, see also \cite[Lemma 5]{Braun-Kiermaier-Wassermann-2018-COST1}, the number
\[
	\lambda_{i,j} \coloneqq \# D|_I^J = \lambda\frac{\qbinom{v-i-j}{k-i}{q}}{\qbinom{v-t}{k-t}{q}}
\]
only depends on $i$ and $j$, but not on the choice of $I$ and $J$.
The numbers $\lambda_{i,j}$ are important parameters of a subspace design.
A further generalization is given by the intersection numbers in \cite{Kiermaier-Pavcevic-2015-JCD23[11]:463-480}.

A nice way to arrange the numbers $\lambda_{i,j}$ is the following triangle form.
\[
	\begin{array}{cccccccccc}
	& & & & \lambda_{0,0} \\
	& & & \lambda_{1,0} & & \lambda_{0,1} \\
	& & \lambda_{2,0} & & \lambda_{1,1} & & \lambda_{0,2} \\
	&\iddots & & \iddots & & \ddots & & \ddots \\
	\lambda_{t,0} & & \lambda_{t-1,1} & & \hdots & & \lambda_{1,t-1} & & \lambda_{0,t}
	\end{array}
\]
For a $q$-analog of the Fano plane, we get:
\begin{align*}
	\begin{array}{ccccc}
	& \multicolumn{3}{c}{\lambda_{0,0} = q^8 + q^6 + q^5 + q^4 + q^3 + x^2 + 1} \\
	& \lambda_{1,0} = q^4 + q^2 + 1 & & \lambda_{0,1} = q^5 + q^3 + q^2 + 1 \\
	\lambda_{2,0} = 1 & & \lambda_{1,1} = q^2 + 1 & & \lambda_{0,2} = q^2 + 1 \\
	\end{array}
\end{align*}

As a consequence of the numbers $\lambda_{i,j}$, the \emph{dual} design $D^\perp = \{B^\perp \mid B\in D\}$ is a subspace design with the parameters
\[
t\text{-}\Big(v,v-k,\frac{\qbinom{v-t}{k}{q}}{\qbinom{v-t}{k-t}{q}}\Big)\text{.}
\]

For a point $P \leq V$, the \emph{derived} design of $D$ in $P$ is
\[
	\Der_P(D) = \{ B/P \mid B \in D|_P\}\text{.}
\]
By \cite{Kiermaier-Laue-2015-AiMoC9[1]:105-115}, $\Der_P(D)$ is a subspace design with the parameters $(t-1)$-$(v-1,k-1,\lambda)_q$.
In the case of a $q$-analog of the Fano plane, $\Der_P(D)$ has the parameters $1$-$(6,2,1)$.

\subsection{Spreads}

A $1$-$(v,k,1)_q$ Steiner system $\mathcal{S}$ is just a partition of the point set of $V$ into $k$-subspaces.
These objects are better known under the name $(k-1)$-\emph{spread} and have been investigated in geometry well before the emergence of subspace designs.
A $1$-spread is also called \emph{line spread}.

A set $\mathcal{S}$ of $k$-subspaces is called a \emph{partial $(k-1)$-spread} if each point is covered by at most one element of $\mathcal{S}$.
The points not covered by any element are called \emph{holes}.
A recent survey on partial spreads is found in \cite{Honold-Kiermaier-Kurz-2018-COST}.

The parameters $1$-$(v,k,1)_q$ are admissible if and only $v$ is divisible by $k$.
In this case, spreads do always exist \cite[\S VI]{Segre-1964-AnnDMPA64[1]:1-76}.
An example can be constructed via field reduction:
We consider $V$ as a vector space over $\F_{q^k}$ and set $\mathcal{S} =  \qbinom{V}{1}{q^k}$.
Switching back to vector spaces over $\F_q$, the set $\mathcal{S}$ is a $(k-1)$-spread of $V$, known as the \emph{Desarguesian} spread.

A $(k-1)$-spread $\mathcal{S}$ is called \emph{geometric} or \emph{normal} if for two distinct blocks $B,B'\in\mathcal{S}$, the set $\mathcal{S}|^{B + B'}$ is always a $(k-1)$-spread of $B + B'$.
In other words, $\mathcal{S}$ is geometric if every $2k$-subspace of $V$ contains either $0$, $1$ or $\frac{\qnumb{2k}{q}}{\qnumb{k}{q}} = q^k + 1$ blocks of $\mathcal{S}$.
It is not hard to see that the Desarguesian spread is geometric.
In fact, it follows from \cite[Theorem~2]{Barlotti-Cofman-1974-AmSUHamb40:231-241} that a $(k-1)$-spread is geometric if and only if it is isomorphic to a Desarguesian spreads.

The derived designs of a $q$-analog of the Fano plane $D$ are line spreads in $\PG(5,2)$.
These spreads have been classified in \cite{Mateva-Topalova-2009-JCD17[1]:90-102} into $131044$ isomorphism types.
The most symmetric one among these spreads is the Desarguesian spread.
Following the notation of \cite{Heden-Sissokho-2016-ArsComb124:161-164}, a point $P$ is called an \emph{$\alpha$}-point of the $q$-analog of the Fano plane $D$ if the derived design in $P$ is the geometric spread.

\subsection{Generalized quadrangles}
\label{subsect:gq}

\begin{definition}
A \emph{generalized quadrangle} is an incidence structure $Q = (\mathcal{P},\mathcal{L},I)$ with a non-empty set of \emph{points} $\mathcal{P}$, a non-empty set of \emph{lines} $\mathcal{L}$, and an incidence relation $I \subseteq \mathcal{P}\times\mathcal{L}$ such hat
\begin{enumerate}[(i)]
	\item Two distinct points are incident with at most a line.
	\item Two distinct lines are incident with at most one point.
	\item For each non-incident point-line-pair $(P,L)$ there is a unique incident point-line-pair $(P',L')$ with $P\mathrel{I} L'$ and $P' \mathrel{I} L$.
\end{enumerate}
\end{definition}
Generalized quadrangles have been introduced in the more general setting of generalized polygons in \cite{Tits-1959-InstHautesEtudesSciPublMath2:13-60}, as a tool in the theory of finite groups.

A generalized quadrangle $Q = (\mathcal{P},\mathcal{L},I)$ is called \emph{degenerate} if there is a point $P$ such that each point of $Q$ is incident with a line through $P$.
If each line of $Q$ is incident with $t+1$ points, and each point is incident with $s+1$ lines, we say that $Q$ is of \emph{order} $(s,t)$.
The \emph{dual} $Q^\perp$ arises from $Q$ by interchanging the role of the points and the lines.
It is again a generalized quadrangle.
Clearly, $(Q^\perp)^\perp = Q$, and $Q$ is of order $(s,t)$ if and only if $Q^\perp$ is of order $(t,s)$.

Furthermore, $Q$ is said to be \emph{projective} if it is embeddable in some Desarguesian projective geometry.
This means that there is a Desarguesian projective geometry with point set $\mathcal{P}'$, line set $\mathcal{L}'$, and point-line incidence relation $I'$ such that $\mathcal{P}\subseteq\mathcal{P}'$, $\mathcal{L}\subseteq\mathcal{L'}$ and for all $(P,L)\in\mathcal{P}\times \mathcal{L}$ we have $P\mathrel{I} L$ if and only if $P' \mathrel{I'} L'$.
The non-degenerate finite projective generalized quadrangles have been classified in \cite[Th.~1]{Buekenhout-Lefevre-1974-ArchMath25[1]:540-552}, see also \cite[{{4.4.8.}}]{Payne-Thas-2009-FiniteGeneralizedQuadrangles2nd}.
These are exactly the so-called \emph{classical generalized quadrangles} which are associated to a quadratic form or a symplectic or Hermitean polarity on the ambient geometry, see \cite[{{3.1.1.}}]{Payne-Thas-2009-FiniteGeneralizedQuadrangles2nd}.

In this article, two of these classical generalized quadrangles will appear.
\begin{enumerate}[(i)]
\item The symplectic generalized quadrangle $W(q)$ \cite[3.1.1~(iii)]{Payne-Thas-2009-FiniteGeneralizedQuadrangles2nd}, consisting of the set of points of $\PG(3,q)$ together with the totally isotropic lines with respect to a symplectic polarity.
Taking the geometry as $\PG(\F_q^4)$, the symplectic polarity can be represented by the alternating bilinear form $\beta(\vek{x},\vek{y}) = x_1 y_2 - x_2 y_1 + x_3 y_4 - x_4 y_3$.
The configuration of the lines $\mathcal{L}$ in $\PG(3,q)$ is also known as a (general) linear complex of lines, see  \cite[3.1.1~(iii)]{Payne-Thas-2009-FiniteGeneralizedQuadrangles2nd} or~\cite[Th.~15.2.13]{Hirschfeld-1985-FiniteProjectiveSpacesOfThreeDimensions}.
Under the Klein correspondence, $\mathcal{L}$ is a non-tangent hyperplane section of the Klein quadric.
\item
The second one is the parabolic quadric $Q(4,q)$, whose points $\mathcal{P}$ are the zeroes of a parabolic quadratic form in $\PG(4,q)$, and whose lines are all the lines contained in $\mathcal{P}$.
Taking the geometry as $\PG(\F_q^5)$, the parabolic quadratic form can be represented by $q(\vek{x},\vek{y}) = x_1 x_2 + x_3 x_4 + x_5^2$.
\end{enumerate}
Both $W(q)$ and $Q(4,q)$ are of order $(q,q)$.
By \cite[{{3.2.1}}]{Payne-Thas-2009-FiniteGeneralizedQuadrangles2nd} they are duals of each other, meaning that $W(q)^\top \cong Q(4,q)$. 

Let $Q = (\mathcal{P},\mathcal{L},I)$ be a generalized quadrangle.
As in projective geometries, a set $\mathcal{S} \subseteq \mathcal{L}$ is called a \emph{spread} of $Q$ if each point of $Q$ is incident with a unique line in $\mathcal{S}$.
Dually, a set $\mathcal{O} \subseteq \mathcal{P}$ is called an \emph{ovoid} of $Q$ if each line of $Q$ is incident with a unique point in $\mathcal{O}$.
Clearly, the spreads of $Q$ bijectively correspond to the ovoids of $Q^\perp$.
This already shows the equivalence of Part~\ref{thm:partition:w} and~\ref{thm:partition:q} in Theorem~\ref{thm:partition}.

\section{Proof of the theorems}
For the remainder of the article, we fix $v = 7$ and assume that $D \subseteq \qbinom{V}{7}{q}$ is a $q$-analog of the Fano plane.
The numbers $\lambda_{i,j}$ are defined as in Section~\ref{subsect:designs}.

By the design property, the intersection dimension of two distinct blocks $B,B'\in D$ is either $0$ or $1$.
So by the dimension formula, $\dim(B + B') \in \{5,6\}$.
Therefore two distinct blocks contained in a common $5$-space always intersect in a point.
Moreover, a solid $S$ of $V$ contains either a single block of no block at all.
We will call $S$ a \emph{rich solid} in the former case and a \emph{poor solid} in the latter.

\begin{remark}
By \cite[Remark~4.2]{Kiermaier-Pavcevic-2015-JCD23[11]:463-480}, the poor solids form a dual $2$-$(7,3,q^4)_q$ subspace design.
By the above discussion, the $\lambda_{0,2} = q^2 + 1$ blocks in any $5$-subspace $F$ form dual partial spread in $F$.
The poor solids contained in $F$ are exactly the holes of that partial spread.
\end{remark}

We will call a $5$-subspace $F$ such that all the $\lambda_{0,2} = q^2 + 1$ blocks in $F$ pass through a common point $P$ a \emph{$\beta$-flat} with focal point $P$.

\begin{lemma}
	The focal point of a $\beta$-flat is uniquely determined.
\end{lemma}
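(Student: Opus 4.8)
The plan is to argue by contradiction. Suppose a $\beta$-flat $F$ admitted two distinct focal points $P$ and $P'$. First I would pass to the line $L = P + P'$, which is a $2$-subspace contained in $F$. By the definition of a focal point, each of the $\lambda_{0,2} = q^2 + 1$ blocks of $D$ contained in $F$ passes through $P$ and through $P'$, hence contains the line $L$.

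The main step is then to compare this with the design property. Since $D$ is a $q$-analog of the Fano plane, it is in particular a $2$-$(7,3,1)_q$ design, so $\lambda_{2,0} = 1$: the line $L$ lies in exactly one block of $D$. In particular $F$ can contain at most one block. But $F$ is a $5$-subspace, so it contains exactly $\lambda_{0,2} = q^2 + 1$ blocks, and $q^2 + 1 \geq 2$ because $q \geq 2$. This is the desired contradiction, and it forces $P = P'$.

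I do not expect a genuine obstacle here; the argument is short once one notices that two distinct focal points would make all blocks of $F$ concurrent along the line $P + P'$, which collides with $\lambda_{2,0} = 1$. The only point worth stating explicitly is the inequality $q^2 + 1 \geq 2$, which guarantees that the count $\lambda_{0,2}$ really does produce two distinct blocks through $L$, an impossibility.
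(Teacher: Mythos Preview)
Your argument is correct and matches the paper's proof essentially verbatim: assume two distinct focal points, observe that all $\lambda_{0,2}=q^2+1>1$ blocks in $F$ then contain the line $P+P'$, and derive a contradiction to the Steiner system property $\lambda_{2,0}=1$. The paper states this in one sentence, but the content is identical.
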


\begin{proof}
	Assume that $P \neq Q$ are focal points of a $\beta$-flat $F$.
	Then all $\lambda_{0,2} = q^2 + 1 > 1$ blocks in $F$ pass through the line $P + Q$, contradicting the Steiner system property.
\end{proof}

\begin{lemma}
	\label{lem:beta_flat_unique}
	Let $H$ be a hyperplane and $P$ a point in $H$.
	Then $P$ is the focal point of at most one $\beta$-flat in $H$.
\end{lemma}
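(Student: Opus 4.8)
The plan is to argue by contradiction via a short double count. Assume $F_1$ and $F_2$ are two distinct $\beta$-flats in $H$, both having $P$ as focal point; I will estimate the number of blocks $B\in D$ with $P\leq B\leq H$ and then compare it with its exact value.

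First I would determine the intersection $S=F_1\cap F_2$. Since $F_1$ and $F_2$ are distinct $5$-subspaces of the $6$-subspace $H$, we have $F_1+F_2=H$, and the dimension formula forces $\dim S=4$; hence $S$ is a solid. By the remark preceding the lemma, every solid is either rich or poor, so $S$ contains at most one block of $D$. As any block lying in both $F_1$ and $F_2$ must lie in $F_1\cap F_2=S$, the two flats share at most one block.

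Next I would bring in the $\beta$-flat hypothesis. Each of $F_1$ and $F_2$ contains exactly $\lambda_{0,2}=q^2+1$ blocks, and by the definition of a $\beta$-flat with focal point $P$ every such block passes through $P$; moreover each of these blocks is contained in $H$ because $F_1,F_2\leq H$. By inclusion--exclusion, the number of blocks $B$ with $P\leq B\leq H$ is therefore at least $(q^2+1)+(q^2+1)-1=2q^2+1$. On the other hand, since $P$ is a point and $H$ a hyperplane, this number equals the parameter $\lambda_{1,1}=q^2+1$ read off from the triangle for the $q$-analog of the Fano plane. As $2q^2+1>q^2+1$ for every prime power $q$, this contradiction shows $F_1=F_2$.

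I do not expect any real obstacle here: the single step that needs a little care is the bound that $F_1$ and $F_2$ share at most one block, which is precisely where the rich/poor dichotomy for solids enters; the remainder is just the dimension formula together with the known value of $\lambda_{1,1}$.
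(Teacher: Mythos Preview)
Your argument is correct and is essentially the same as the paper's: both use that $\lambda_{1,1}=\lambda_{0,2}=q^2+1$, that all blocks of a $\beta$-flat pass through its focal point, and that the solid $F_1\cap F_2$ contains at most one block. The only cosmetic difference is that the paper concludes directly that all $q^2+1$ blocks of $D|_P^H$ lie in the solid $F_1\cap F_2$ (contradicting the bound of one), whereas you reach the same contradiction via inclusion--exclusion.
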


\begin{proof}
	There are $\lambda_{1,1} = q^2 + 1$ blocks in $H$ passing through $P$.
	For any $\beta$-flat $F < H$ with focal point $P$, all these blocks are contained in $F$.

	Now assume that there are two such $\beta$-flats $F \neq F'$.
	Then the $q^2 + 1 > 1$ blocks in $D|_P^H$ are contained in $F \cap F'$.
	This is a contradiction, since $\dim(F \cap F') \leq 4$ and any solid contains at most a single block.
\end{proof}

\begin{lemma}
	\label{lem:beta_flat}
	Let $F\in\qbinom{V}{5}{q}$ be a $\beta$-flat with focal point $P$.
	\begin{enumerate}[(a)]
		\item\label{lem:beta_flat:modspread} Each point $Q \neq P$ is covered by a unique block in $F$. \\
		(In other words: $D|^F / P$ is a line spread of $F/P \cong \PG(3,q)$)
		\item\label{lem:beta_flat:poor} A solid $S$ of $F$ is poor if and only if it does not contain $P$.
		\item\label{lem:beta_flat:poorspread} For all poor solids $S$ of $F$, the set $\{B \cap S \mid B\in D|^F\}$ is a line spread of $S$.
	\end{enumerate}
\end{lemma}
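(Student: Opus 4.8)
The plan is to pass to the quotient $F/P\cong\PG(3,q)$ and to exploit that any two distinct blocks of $D$ contained in the $\beta$-flat $F$ meet precisely in the focal point $P$. For part~\ref{lem:beta_flat:modspread}, I would first record this intersection fact: for distinct $B,B'\in D|^F$ we have $B+B'\le F$, so $\dim(B+B')\le 5$; together with the general bound $\dim(B+B')\in\{5,6\}$ this forces $\dim(B+B')=5$, hence $\dim(B\cap B')=1$, and since $P\le B\cap B'$ by the $\beta$-flat property we conclude $B\cap B'=P$. Consequently the $\lambda_{0,2}=q^2+1$ images $B/P$ with $B\in D|^F$ are pairwise disjoint lines of $F/P\cong\PG(3,q)$. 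Since $q^2+1$ pairwise disjoint lines cover $(q^2+1)(q+1)=\qnumb{4}{q}$ points, they cover all of $\PG(3,q)$ and therefore form a line spread $\mathcal{S}$. Translating back, a point $Q\ne P$ of $F$ lies on the unique block $B\in D|^F$ whose image $B/P$ is the spread line of $\mathcal{S}$ through $(Q+P)/P$; uniqueness also follows directly, as two distinct blocks of $D|^F$ through $Q$ would intersect in a point $\ne P$, contradicting $B\cap B'=P$.

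For part~\ref{lem:beta_flat:poor}: if $P\not\le S$, then any block contained in $S$ would lie in $D|^F$ and hence pass through $P$, which is absurd, so $S$ is poor. If $P\le S$, then $S/P$ is a plane of $\PG(3,q)$; every line of $\mathcal{S}$ meets this plane, and comparing the point count $a(q+1)+(q^2+1-a)=q^2+q+1$, where $a$ is the number of spread lines contained in the plane, yields $a=1$. Thus $S$ contains exactly the one block $B\in D|^F$ with $P\le B\le S$, and no other, since every block in $F$ passes through $P$; hence $S$ is rich.

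For part~\ref{lem:beta_flat:poorspread}: by part~\ref{lem:beta_flat:poor} we have $P\not\le S$, so $F=P\oplus S$ and the projection $\pi\colon F\to S$ along $P$ induces an isomorphism $F/P\xrightarrow{\sim}S$. For $B\in D|^F$ one has $\dim\pi(B)=\dim B-\dim(B\cap P)=2$, while $B\cap S=\pi(B\cap S)\le\pi(B)$, so $\pi(B)=B\cap S$; hence $\{B\cap S\mid B\in D|^F\}$ is the $\pi$-image of the spread $\mathcal{S}$ from part~\ref{lem:beta_flat:modspread} and is thus a line spread of $S$. Alternatively one can argue directly: $\dim(B\cap S)\ge 3+4-5=2$ and $\dim(B\cap S)<3$ as $S$ is poor, so $B\cap S$ is a line, and for distinct $B,B'$ we get $(B\cap S)\cap(B'\cap S)=(B\cap B')\cap S=P\cap S=0$, giving $q^2+1$ pairwise disjoint lines of $\PG(S)=\PG(3,q)$, a spread by the same point count as above.

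I do not expect a serious obstacle here; the only steps needing care are the dimension bookkeeping that pins down $B\cap B'=P$ in part~\ref{lem:beta_flat:modspread}, and the repeatedly used fact that every block contained in $F$ automatically passes through the focal point $P$ — this is precisely what prevents additional blocks from appearing in the case $P\le S$ of part~\ref{lem:beta_flat:poor}.
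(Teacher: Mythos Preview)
Your argument is correct and matches the paper's reasoning for parts~\ref{lem:beta_flat:modspread} and~\ref{lem:beta_flat:poorspread} (your ``alternative'' proof of \ref{lem:beta_flat:poorspread} is exactly the paper's). The only noteworthy difference is part~\ref{lem:beta_flat:poor}: the paper argues by a global count---there are $(q^2+1)(q+1)$ rich solids in $F$ (each block lies in $q+1$ solids of $F$) and $q^4$ solids avoiding $P$, and since these numbers sum to $\qbinom{5}{4}{q}$ the two classes must coincide with the rich and poor solids respectively---whereas you treat each direction separately via the spread $\mathcal{S}$ in $F/P$. Both are short; your version has the mild advantage of pinning down that a solid through $P$ contains \emph{exactly} one block (the $a=1$ computation), while the paper's count avoids the case distinction.
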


\begin{proof}
Part~\ref{lem:beta_flat:modspread}:
As the blocks in $D|^F$ intersect each other only in the point $P$, the number of points in $\qbinom{F}{1}{q}\setminus \{P\}$ covered by these blocks is $(q^2 + 1)(\qbinom{3}{1}{q} - 1) = q^4 + q^3 + q^2 + q = \qbinom{5}{1}{q}-1$.
Therefore, all points in $F$ different from $P$ are covered by a single block in $D_F$.

Part~\ref{lem:beta_flat:poor}:
The number of solids in $F$ containing one of the $q^2 + 1$ blocks in $F$ is $(q^2 + 1)\cdot \qbinom{5-3}{4-3}{q} = (q^2 + 1)(q + 1) = q^3 + q^2 + q + 1$.%
\footnote{Remember that a solid cannot contain $2$ blocks.}
These solids are rich.
Moreover, the $q^4$ solids in $F$ not containing $P$ do not contain a block, so they are poor.
As $q^4 + (q^3 + q^2 + q + 1) = \qbinom{5}{4}{q}$ is already the total number of solids in $F$, the poor solids in $F$ are precisely those not containing $P$.

Part~\ref{lem:beta_flat:poorspread}:
Let $S$ be a poor solid of $F$.
For every block $B$ in $F$ we have $\dim(B \cap S) \leq 2$ as $S$ is poor, and moreover $\dim(B \cap S) \geq \dim(B) + \dim(S) - \dim(F) = 3 + 4 - 5 = 2$ by the dimension formula.
So for all blocks $B$ in $F$ we get that $B + S = F$ and $B \cap S$ is a line.
By parts~\ref{lem:beta_flat:modspread} and~\ref{lem:beta_flat:poor}, every point of the poor solid $S$ is contained in a unique block in $F$.
Hence $\{B \cap S \mid B\in D \text{ and }B + S = F\}$ is a line spread of $S$.
\end{proof}

\begin{lemma}
	\label{lem:alpha_to_beta}
	Let $B,B'$ be two distinct blocks which intersect in an $\alpha$-point $P$.
	Then $P$ is the focal point of the $\beta$-flat $B + B'$.
\end{lemma}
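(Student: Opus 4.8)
The plan is to pass to the derived design in $P$, where the hypothesis that $P$ is an $\alpha$-point makes the configuration transparent, and then to compare two counts of blocks inside $F := B + B'$.

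First I would check that $F$ is a $5$-subspace. Since $B$ and $B'$ are distinct blocks of the Steiner system $D$, both containing $P$, the design property forces $\dim(B \cap B') = 1$, hence $\dim(F) = 3 + 3 - 1 = 5$; in particular $F$ contains the two blocks $B, B'$, so asking whether $F$ is a $\beta$-flat makes sense. Next, consider the quotient $V/P \cong \PG(5,q)$ together with $\mathcal{S} := \Der_P(D)$, a line spread of $V/P$. Because $P$ is an $\alpha$-point, $\mathcal{S}$ is geometric. The images $B/P$ and $B'/P$ are two lines of $\mathcal{S}$, and they are distinct since $(B \cap B')/P = 0$. Therefore the $4$-subspace $F/P = B/P + B'/P$ of $V/P$ contains at least two lines of $\mathcal{S}$, and the geometric-spread property, applied in the instance $2k = 4$, $k = 2$, forces it to contain exactly $q^2 + 1$ lines of $\mathcal{S}$; equivalently, $\mathcal{S}|^{F/P}$ is a line spread of $F/P \cong \PG(3,q)$.

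Finally I would translate this back to $V$. Lines of $\mathcal{S}$ contained in $F/P$ correspond bijectively, via $C \mapsto C/P$, to blocks $C \in D$ with $P \leq C \leq F$. So there are exactly $q^2 + 1$ blocks of $D$ passing through $P$ and contained in $F$. On the other hand, the total number of blocks of $D$ contained in the $5$-space $F$ equals $\lambda_{0,2} = q^2 + 1$. Comparing the two counts, every block contained in $F$ passes through $P$, which is precisely the statement that $F$ is a $\beta$-flat with focal point $P$.

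No step here is genuinely difficult; the only point requiring care is the bookkeeping between $V$ and the quotient $V/P$, and invoking the geometric-spread condition in the correct instance, namely for the $4$-dimensional subspace $F/P$ of the $6$-dimensional space $V/P$.
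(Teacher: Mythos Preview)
Your proof is correct and follows essentially the same route as the paper: pass to the derived spread in $P$, use the geometric-spread property on the $4$-space $F/P$ to get $q^2+1$ blocks through $P$ inside $F$, and compare with $\lambda_{0,2}=q^2+1$ to conclude that every block in $F$ passes through $P$. The only difference is that you spell out explicitly how the geometric condition is invoked on $F/P$, whereas the paper states this step more tersely.
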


\begin{proof}
	Since $P = B \cap B'$ is a point, $F = B + B'$ is a $5$-subspace.
	Since $P$ is an $\alpha$-point, we have that $\{B'' / P \mid B''\in D|^F_P\}$ is a line spread of $F / P \cong \F_q^4$.
	Such a line spread contains $\frac{[4]_q}{[2]_q} = q^2 + 1$ lines, so $F$ contains $q^2 + 1$ blocks passing through $P$.
	However, the total number of blocks contained in $F$ is only $\lambda_{0,2} = q^2 + 1$, so $F$ is a $\beta$-flat with focal point $P$.
\end{proof}

\begin{lemma}
	\label{lem:4flat_to_point}
	Let $F$ be a $5$-subspace such that all points of $F$ are $\alpha$-points.
	Then $F$ is a $\beta$-flat.
\end{lemma}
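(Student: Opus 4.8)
The plan is to reduce the statement to Lemma~\ref{lem:alpha_to_beta}. First I would observe that every $5$-subspace of $V$ contains exactly $\lambda_{0,2} = q^2 + 1$ blocks of $D$, and since $q \geq 2$ this number is at least $2$. So $F$ contains two distinct blocks $B, B' \in D|^F$, and I fix such a pair.

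Next I would invoke the elementary observation recorded at the beginning of Section~3: two distinct blocks of $D$ lying in a common $5$-subspace meet in a point. Applying this to $B$ and $B'$ produces a point $P = B \cap B'$. Since $P \leq B \leq F$, the hypothesis forces $P$ to be an $\alpha$-point of $D$. Now Lemma~\ref{lem:alpha_to_beta} applies directly and tells us that $P$ is the focal point of the $\beta$-flat $B + B'$. Finally, the dimension formula gives $\dim(B + B') = \dim B + \dim B' - \dim(B \cap B') = 3 + 3 - 1 = 5$, and since $B + B' \leq F$ with $\dim F = 5$ we get $B + B' = F$. Hence $F$ is a $\beta$-flat (with focal point $P$), which is exactly what is claimed.

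There is essentially no real obstacle here: once one knows from $\lambda_{0,2} = q^2 + 1 > 1$ that $F$ carries at least two blocks, the conclusion is an immediate combination of Lemma~\ref{lem:alpha_to_beta} with the dimension formula. The only point worth stating carefully is that the focal point $P$ obtained this way does not depend on the chosen pair $B, B'$ — but this is automatic, since by the preceding lemma the focal point of a $\beta$-flat is uniquely determined.
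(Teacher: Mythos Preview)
Your argument is correct and follows essentially the same route as the paper: pick two of the $\lambda_{0,2}=q^2+1>1$ blocks in $F$, note that their intersection $P$ is a point and their span is $F$, and then apply Lemma~\ref{lem:alpha_to_beta} to the $\alpha$-point $P$. The extra remark about uniqueness of the focal point is not needed for the lemma as stated, but it does no harm.
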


\begin{proof}
	The $5$-subspace $F$ contains $\lambda_{0,2} = q^2 + 1 > 1$ blocks.
	Let $B, B'$ be two distinct blocks in $F$.
	Then $P = B\cap B'$ is a point and $F = B + B'$.
	By the precondition, $P$ is an $\alpha$-point, so by Lemma~\ref{lem:alpha_to_beta}, $P$ is the focal point of the $\beta$-flat $F$.
\end{proof}

\begin{remark}
	The statement of Lemma~\ref{lem:4flat_to_point} is still true if $F$ contains a single non-$\alpha$-point $Q$.
	Then either all blocks contained in $F$ pass through $Q$, or there are two distinct blocks $B$, $B'$ in $F$ such that $P = B\cap B' \neq Q$.
	In the latter case, all blocks pass through the $\alpha$-point $P$ as in the proof of Lemma~\ref{lem:4flat_to_point}.
\end{remark}

\begin{lemma}
	\label{lem:6-1-chain}
	Let $H$ be a hyperplane and $P$ an $\alpha$-point contained in $H$.
	Then $H$ contains a unique $\beta$-flat whose focal point is $P$.
\end{lemma}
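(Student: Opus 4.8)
The plan is to separate the claim into uniqueness and existence. Uniqueness is immediate: Lemma~\ref{lem:beta_flat_unique} already states that a point of a hyperplane is the focal point of at most one $\beta$-flat contained in that hyperplane. So the whole task reduces to producing \emph{one} $\beta$-flat in $H$ with focal point $P$.

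For existence, I would feed two suitable blocks into Lemma~\ref{lem:alpha_to_beta}. First note that the number of blocks of $D$ passing through $P$ and contained in $H$ equals $\lambda_{1,1} = q^2 + 1 \geq 2$ (this is legitimate since $P \leq H$ and $1 + 1 \leq t = 2$). Pick two distinct such blocks $B \neq B'$. By the Steiner system property two distinct blocks meet in a point, and this point contains $P$, hence equals $P$; consequently $F \coloneqq B + B'$ is a $5$-subspace with $P \leq F \leq H$. Since $P = B \cap B'$ is an $\alpha$-point by hypothesis, Lemma~\ref{lem:alpha_to_beta} applies verbatim and yields that $F$ is a $\beta$-flat with focal point $P$. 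As $F \leq H$, this is the required $\beta$-flat, and uniqueness follows from Lemma~\ref{lem:beta_flat_unique}.

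I do not expect a genuine obstacle here; the statement is a short assembly of results already in hand. The only point that needs a little care is the dimension bookkeeping, namely that $B + B'$ really is a $5$-subspace lying inside $H$ (rather than of some other dimension), which is exactly what the equality $B \cap B' = P$ guarantees. If one prefers an argument that does not route through Lemma~\ref{lem:alpha_to_beta}, one can instead work in the derived design: $\Der_P(D)$ is the Desarguesian line spread $\mathcal{S}$ of $V/P \cong \PG(5,q)$ because $P$ is an $\alpha$-point, a short count (each line of $\mathcal{S}$ either lies in $H/P$ or meets it in a single point) shows that $H/P$ carries exactly $q^2 + 1$ lines of $\mathcal{S}$, and the geometric (normality) property of $\mathcal{S}$ forces these lines into a common solid $F/P$ of $H/P$; pulling back to $V$ gives the $\beta$-flat. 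The first route is shorter, so that is the one I would write up.
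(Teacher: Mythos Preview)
Your proposal is correct and matches the paper's proof essentially line for line: both count $\lambda_{1,1}=q^2+1>1$ blocks in $D|_P^H$, pick two of them, invoke Lemma~\ref{lem:alpha_to_beta} to see that $F=B+B'$ is a $\beta$-flat with focal point $P$, and then cite Lemma~\ref{lem:beta_flat_unique} for uniqueness. Your extra care in justifying $B\cap B'=P$ and $F\leq H$ is fine, and the alternative derived-design argument you sketch is unnecessary but also sound.
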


\begin{proof}
	There are $\lambda_{1,1} = q^2 + 1 > 1$ blocks in $H$ containing $P$.
	Let $B,B'\in D|^H_P$.
	Then $P = B \cap B'$.
	By Lemma~\ref{lem:alpha_to_beta}, the $\alpha$-point $P$ is the focal point of the $\beta$-flat $F = B + B'$.
	By Lemma~\ref{lem:beta_flat_unique}, the $\beta$-flat $F$ is unique.
\end{proof}

Now we fix a hyperplane $H$ of $V$ and assume that all its points are $\alpha$-points.

By Lemma~\ref{lem:4flat_to_point}, every $5$-subspace $F$ is a $\beta$-flat.
We denote its unique focal point by $\alpha(F)$.
Moreover by Lemma~\ref{lem:6-1-chain}, each point $P$ of $H$ is the focal point of a unique $\beta$-flat $F$ in $H$.
We will denote this $\beta$-flat by $\beta(P)$.
Clearly, the mappings
\[
\alpha : \qbinom{H}{5}{q} \to \qbinom{H}{1}{q}
\quad\text{and}\quad
\beta : \qbinom{H}{1}{q} \to \qbinom{H}{5}{q}
\]
are inverse to each other.
So they provide a bijective correspondence between the points and the $5$-subspaces of $V$.

\begin{lemma}
	\label{lem:PBF}
	Let $B$ be a block in $H$.
	\begin{enumerate}[(a)]
		\item\label{lem:PBF:PB} For all points $P$ of $B$, $B \leq \beta(P)$.
		\item\label{lem:PBF:BF} For all $5$-subspaces $F$ in $H$ containing $B$, $\alpha(F) \leq B$.
	\end{enumerate}
\end{lemma}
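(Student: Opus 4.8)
The plan is to treat the two parts separately, each as a short deduction from the definition of a $\beta$-flat together with the bijections $\alpha,\beta$ and a comparison of the relevant parameters $\lambda_{i,j}$.

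For part~\ref{lem:PBF:PB}, I fix a point $P$ of $B$ and let $F=\beta(P)$ be the unique $\beta$-flat in $H$ with focal point $P$, which exists by Lemma~\ref{lem:6-1-chain} since $P$ is an $\alpha$-point of $H$. By the defining property of a $\beta$-flat with focal point $P$, all $\lambda_{0,2}=q^2+1$ blocks contained in $F$ pass through $P$, and since $F\leq H$ they all belong to $D|^H_P$. As $\# D|^H_P=\lambda_{1,1}=q^2+1$ as well, these are already \emph{all} the blocks in $H$ through $P$. Since $B$ is a block of $H$ with $P\leq B$, it must be one of them, hence $B\leq F=\beta(P)$.

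For part~\ref{lem:PBF:BF}, I fix a $5$-subspace $F$ of $H$ with $B\leq F$. Every point of $F$ is an $\alpha$-point (as $F\leq H$), so $F$ is a $\beta$-flat by Lemma~\ref{lem:4flat_to_point}, and in our notation its focal point is $\alpha(F)$. The defining property of the $\beta$-flat $F$ says exactly that every block contained in $F$ passes through its focal point; applying this to the block $B\leq F$ yields $\alpha(F)\leq B$.

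I do not expect a genuine obstacle. The only step requiring a moment of care is the count in part~\ref{lem:PBF:PB}: one has to notice that the $q^2+1$ blocks sitting inside $\beta(P)$ exhaust $D|^H_P$, which is precisely the numerical coincidence $\lambda_{0,2}=\lambda_{1,1}=q^2+1$ combined with the fact (built into the notion of focal point) that every block of $\beta(P)$ passes through $P$. Once this is in place, part~\ref{lem:PBF:BF} is immediate; indeed, for a block $B$ of $H$ and a point $P$ of $H$ the two parts together amount to the equivalence $P\leq B\iff B\leq\beta(P)$, its two directions corresponding to each other under $\alpha=\beta^{-1}$.
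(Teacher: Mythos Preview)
Your proof is correct and essentially identical to the paper's: part~\ref{lem:PBF:PB} is the same counting argument $\lambda_{1,1}=\lambda_{0,2}=q^2+1$ showing that the blocks inside $\beta(P)$ exhaust $D|^H_P$, and part~\ref{lem:PBF:BF} is the same immediate appeal to the definition of the focal point $\alpha(F)$. The only difference is cosmetic: the paper has already recorded before this lemma that every $5$-subspace of $H$ is a $\beta$-flat, so it does not re-invoke Lemma~\ref{lem:4flat_to_point} in part~\ref{lem:PBF:BF}.
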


\begin{proof}
	For part~\ref{lem:PBF:PB}, let $P$ be a point on $B$.
	There are $\lambda_{1,1} = q^2 + 1$ blocks in $H$ passing through $P$, which equals the number $\lambda_{0,2}$ of blocks in $\beta(P)$ (which all pass through $P$).
	Therefore, $B \leq \beta(P)$.
	
	For part~\ref{lem:PBF:BF}, let $F$ be a $5$-subspace containing $B$.
	All blocks in $F$ pass through its focal point $\alpha(F)$.
\end{proof}

By Lemma~\ref{lem:beta_flat:poor}, each $5$-subspace and a fortiori the hyperplane $H$ contains a poor solid $S$.
This poor solid $S$ is fixed for the remainder of this article.

The set of $\qbinom{6-4}{5-4}{q} = q+1$ intermediate $5$-subspaces $F$ with $S < F < H$ will be denoted by $\mathcal{F}$.
For each $F\in\mathcal{F}$, the set $\mathcal{L}_F := \{B \cap S \mid B\in D\text{ and }B\in D|^F\}$ is a line spread of $S$ by Lemma~\ref{lem:beta_flat}\ref{lem:beta_flat:poorspread}.

\begin{lemma}
\label{lem:LF_disjoint}
The line spreads $\mathcal{L}_F$ with $F\in\mathcal{F}$ are pairwise disjoint.
\end{lemma}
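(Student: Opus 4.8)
The plan is to argue by contradiction. Suppose the spreads are not pairwise disjoint, so there are distinct $F,F'\in\mathcal{F}$ and a line $\ell$ with $\ell\in\mathcal{L}_F\cap\mathcal{L}_{F'}$. By the definition of these sets, I can choose blocks $B\in D|^F$ and $B'\in D|^{F'}$ with $\ell = B\cap S = B'\cap S$. The goal is to rule out every possibility for the pair $(B,B')$.

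The first ingredient I would establish is the identity $F\cap F' = S$. Since $S < F$ and $S < F'$, we have $S\leq F\cap F'$; and since $F,F'$ are two distinct $5$-subspaces of the $6$-subspace $H$, neither is contained in the other, so $F\cap F'$ is properly contained in $F$ and hence has dimension at most $4$. Combined with $\dim S = 4$, this forces $F\cap F' = S$.

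Next I would distinguish two cases according to whether $B = B'$. If $B = B'$, then $B\leq F$ and $B\leq F'$, hence $B\leq F\cap F' = S$; but $S$ is a poor solid and therefore contains no block, a contradiction. If $B\neq B'$, then by the design (Steiner system) property the intersection dimension of two distinct blocks is at most $1$, so $\dim(B\cap B')\leq 1$; on the other hand $\ell\leq B$ and $\ell\leq B'$ force $\ell\leq B\cap B'$, contradicting $\dim\ell = 2$. In either case we reach a contradiction, which proves that the $\mathcal{L}_F$, $F\in\mathcal{F}$, are pairwise disjoint.

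I do not expect any serious obstacle here; the only step that requires a moment's care is the dimension count giving $F\cap F' = S$, which is precisely what makes the case $B = B'$ collapse against the poorness of $S$. Everything else is immediate from the basic intersection property of blocks in a $q$-Steiner system.
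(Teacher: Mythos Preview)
Your proof is correct and rests on the same key observation as the paper: two distinct blocks of a $2$-$(7,3,1)_q$ system cannot share a line, so $B=B'$. The paper packages the conclusion slightly more directly---from $B=B'$ it infers $F = B+S = B'+S = F'$ without a contradiction frame or case split---but the substance is identical to your argument.
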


\begin{proof}
Let $L\in \mathcal{L}_F\cap \mathcal{L}_{F'}$ with $F,F'\in\mathcal{F}$.
Then $L = B\cap S = B'\cap S$ with $B\in\mathcal{L}_F$ and $B'\in\mathcal{L}_{F'}$.
So $B$ and $B'$ are two blocks passing through the same line $L$.
The Steiner system property gives $B = B'$.
Hence $F = B+S = B' + S = F'$.
\end{proof}

Now let $\mathcal{L} = \bigcup_{F\in\mathcal{F}} \mathcal{L}_F$.

\begin{lemma}\label{lem:spread_partition}
The set $\mathcal{L}$ consists of $q^3 + q^2 + q + 1$ lines of $S$ and is partitionable into $q + 1$ line spreads of $S$.
\end{lemma}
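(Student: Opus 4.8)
The plan is simply to recognize that $\mathcal{L}$ has already been assembled as the union of the $q+1$ sets $\mathcal{L}_F$, $F\in\mathcal{F}$, and to verify that this union is in fact a partition into line spreads of $S$.

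First I would recall from Lemma~\ref{lem:beta_flat}\ref{lem:beta_flat:poorspread} that each $\mathcal{L}_F$ is a line spread of the poor solid $S\cong\PG(3,q)$; in particular $\mathcal{L}_F$ is nonempty and has exactly $\qnumb{4}{q}/\qnumb{2}{q}=q^2+1$ elements. Next I would note that $\#\mathcal{F}=\qbinom{6-4}{5-4}{q}=q+1$, as already observed when $\mathcal{F}$ was introduced.

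Then I would invoke Lemma~\ref{lem:LF_disjoint}, which says that the line spreads $\mathcal{L}_F$ are pairwise disjoint. Consequently $\{\mathcal{L}_F\mid F\in\mathcal{F}\}$ is a family of $q+1$ pairwise disjoint nonempty line spreads of $S$ whose union is $\mathcal{L}$, that is, a partition of $\mathcal{L}$ into $q+1$ line spreads. Summing cardinalities over this partition yields $\#\mathcal{L}=(q+1)(q^2+1)=q^3+q^2+q+1$, as claimed.

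I do not expect any genuine obstacle here: the statement is a bookkeeping consequence of Lemmas~\ref{lem:beta_flat} and~\ref{lem:LF_disjoint} together with the size of $\mathcal{F}$. The only point deserving a moment's attention is that the $\mathcal{L}_F$ are genuinely distinct sets, so that the count is exact; this is automatic once one knows that each $\mathcal{L}_F$ is a nonempty line spread.
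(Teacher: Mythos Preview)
Your proposal is correct and follows essentially the same route as the paper's proof: invoke Lemma~\ref{lem:LF_disjoint} for pairwise disjointness, Lemma~\ref{lem:beta_flat}\ref{lem:beta_flat:poorspread} for each $\mathcal{L}_F$ being a line spread of size $q^2+1$, and multiply by $\#\mathcal{F}=q+1$. The paper phrases the count as $\#\mathcal{F}\cdot\#D|^F=(q+1)(q^2+1)$, which is the same computation.
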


\begin{proof}
	By Lemma~\ref{lem:LF_disjoint}, the sets $\mathcal{L}_F$ are pairwise disjoint, so $\mathcal{L}$ is a set of $\#\mathcal{F} \cdot \#D|^F = (q+1)(q^2 + 1) = q^3 + q^2 + q + 1$ lines in $S$ which admits a partition into the $q+1$ line spreads $\mathcal{L}_F$ with $F\in\mathcal{F}$.
\end{proof}

\begin{lemma}
\label{lem:pencil}
For each point $P$ of $S$, $\mathcal{L}|_P$ is a line pencil in the plane $E_P = \beta(P) \cap S$.
\end{lemma}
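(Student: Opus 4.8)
The plan is to prove that $\mathcal{L}|_P$ is precisely the set of the $q+1$ lines of the plane $E_P = \beta(P)\cap S$ through $P$. I will establish this in three steps: (i) $E_P$ is a plane containing $P$; (ii) $\mathcal{L}|_P$ consists of exactly $q+1$ distinct lines, all through $P$; (iii) every line of $\mathcal{L}|_P$ lies in $E_P$. Since a projective plane has exactly $q+1$ lines through each of its points, steps (ii) and (iii) together force $\mathcal{L}|_P$ to be the full line pencil of $P$ in $E_P$.

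For step (i): $P$ lies in $S\leq H$, and as all points of $H$ are $\alpha$-points the $\beta$-flat $\beta(P)\leq H$ is defined with focal point $P$; hence $P\leq\beta(P)$ and $P\leq\beta(P)\cap S=E_P$. If we had $S\leq\beta(P)$, then $S$ would be a solid of the $\beta$-flat $\beta(P)$ that contains its focal point $P$, and therefore rich by Lemma~\ref{lem:beta_flat}\ref{lem:beta_flat:poor}; this contradicts the choice of $S$ as a poor solid. So $\beta(P)+S=H$ and $\dim E_P = 5+4-6=3$, i.e.\ $E_P$ is a plane through $P$.

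For step (ii): for each $F\in\mathcal{F}$ the line spread $\mathcal{L}_F$ of $S$ covers $P$ by a unique line, and by Lemma~\ref{lem:LF_disjoint} the $q+1$ spreads $\mathcal{L}_F$ are pairwise disjoint, so the $q+1$ lines through $P$ obtained this way are pairwise distinct; since every line of $\mathcal{L}$ lies in some $\mathcal{L}_F$, this gives $\#(\mathcal{L}|_P)=q+1$. For step (iii): fix $F\in\mathcal{F}$ and let $L\in\mathcal{L}_F$ be its line through $P$, say $L=B\cap S$ with $B\in D|^F$ and $P\leq B$. Then $B$ is a block of $H$ containing the point $P$, so $B\leq\beta(P)$ by Lemma~\ref{lem:PBF}\ref{lem:PBF:PB}, whence $L=B\cap S\leq\beta(P)\cap S=E_P$.

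I do not expect a real obstacle here; the only place calling for a little care is step (i), verifying $S\not\leq\beta(P)$ so that $E_P$ has the right dimension — and this is exactly where the choice of $S$ as a \emph{poor} solid (rather than an arbitrary one) enters essentially. The rest is a direct combination of Lemmas~\ref{lem:beta_flat}, \ref{lem:LF_disjoint} and~\ref{lem:PBF} with the elementary count of lines through a point in a plane.
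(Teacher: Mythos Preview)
Your proof is correct and follows essentially the same approach as the paper: both arguments establish that $E_P$ is a plane via the poorness of $S$ and Lemma~\ref{lem:beta_flat}\ref{lem:beta_flat:poor}, show each $L\in\mathcal{L}|_P$ lies in $E_P$ via Lemma~\ref{lem:PBF}\ref{lem:PBF:PB}, and count $q+1$ lines through $P$ using the disjoint spreads $\mathcal{L}_F$. Your organization into three explicit steps is slightly more detailed (e.g.\ you note $P\leq E_P$ and spell out $\beta(P)+S=H$), but the substance is identical.
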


\begin{proof}
	Let $P$ be a point in $S$.

	By Lemma~\ref{lem:beta_flat}\ref{lem:beta_flat:poor}, the poor solid $S$ is not contained in the $5$-subspace $\beta(P)$.
	Therefore, $\dim(\beta(P)\cap S) \leq 3$.
	On the other hand, as both $S$ and $\beta(P)$ are contained in $H$, we have $\dim(\beta(P) + S) \leq \dim(H) = 6$ and therefore by the dimension formula $\dim(\beta(P)\cap S) = \dim(\beta(P)) + \dim(S) - \dim(\beta(P)+ S) \geq 3$.
	Hence $E_P = \beta(P) \cap S$ is a plane.

	Let $L\in \mathcal{L}|_P$.
	Then there is a block $B\in D|^H$ with $B\cap S = L$.
	By Lemma~\ref{lem:PBF}\ref{lem:PBF:PB}, $B \leq \beta(P)$.
	So $L = B \cap S \leq \beta(P) \cap S = E_P$.
	As the disjoint union of $q+1$ line spreads of $S$, $\mathcal{L}$ contains $q+1$ lines passing through $P$.
	Therefore, these lines form a line pencil in $E_P$ through $P$.
\end{proof}

\begin{lemma}
	\label{lem:gq}
	The incidence structure $(\qbinom{S}{1}{q},\mathcal{L}, \subseteq)$ is a projective generalized quadrangle of order $(s,t) = (q, q)$.
\end{lemma}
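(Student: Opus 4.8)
My plan is to dispose of the two ``easy'' generalized quadrangle axioms and of the order statement directly from the structure of the ambient projective space, and then to concentrate all the work on the regularity axiom~(iii). Since incidence in $(\qbinom{S}{1}{q},\mathcal{L},\subseteq)$ is containment inside the Desarguesian projective geometry $\PG(S)\cong\PG(3,q)$, axioms~(i) and~(ii) hold trivially (two distinct points span at most one line, two distinct lines meet in at most one point) and $(\qbinom{S}{1}{q},\mathcal{L},\subseteq)$ is projective by construction. For the order, each line of $\PG(3,q)$ carries $q+1$ points, and by Lemma~\ref{lem:pencil} the lines of $\mathcal{L}$ through a point $P$ of $S$ form the full line pencil in the plane $E_P=\beta(P)\cap S$, hence number exactly $q+1$; so the order is $(s,t)=(q,q)$.

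It remains to verify axiom~(iii), which I expect to be the main obstacle. Given a non-incident pair $(P,L)$ with $P\in\qbinom{S}{1}{q}$, $L\in\mathcal{L}$ and $P\notin L$, I would use Lemma~\ref{lem:pencil} to regard $E_P=\beta(P)\cap S$ as a plane containing $P$ with $\mathcal{L}|_P$ equal to the set of all $q+1$ lines of $E_P$ through $P$. A line and a plane of $\PG(3,q)$ always meet, so $L\cap E_P\neq\emptyset$. The crux is to show that $L\not\subseteq E_P$. Granting this, $L\cap E_P$ is a single point $P'$, and $L'=P+P'$ (a genuine line, since $P'\in L$ but $P\notin L$) lies in $E_P$ and runs through $P$, so $L'\in\mathcal{L}|_P\subseteq\mathcal{L}$ and $(P',L')$ is an incident pair with $P\mathrel{I}L'$ and $P'\mathrel{I}L$. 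Any other candidate $(P'',L'')$ would have $L''\in\mathcal{L}|_P$, hence $L''\subseteq E_P$ and $P''\in L''\cap L\subseteq E_P\cap L=\{P'\}$, forcing $P''=P'$ and then $L''=P+P'=L'$; this gives both existence and uniqueness.

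To establish $L\not\subseteq E_P$ I would argue by contradiction, and this is where the geometry of the $\beta$-flats enters. Assume $L\subseteq E_P$. Every point $Q$ of $L$ satisfies $L\in\mathcal{L}|_Q$, hence $L\subseteq E_Q$, so each of the $q+2$ points $P,Q_0,\dots,Q_q$ (with $Q_0,\dots,Q_q$ the points of $L$) has its associated plane containing $L$. Only $q+1$ planes of $S$ contain $L$, so by the pigeonhole principle two of these $q+2$ points, say $A\neq B$, have $E_A=E_B=:\pi$; moreover $A,B\in\pi$, since for any point $X$ of $S$ one has $X\in\beta(X)$ (because $\beta(X)$ contains blocks, each through its focal point $X$) and hence $X\in\beta(X)\cap S=E_X$. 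Then $\mathcal{L}$ contains all $q+1$ lines of $\pi$ through $A$ together with all $q+1$ lines of $\pi$ through $B$, and since these two pencils share only the line $A+B$, at least $(q+1)+(q+1)-1=2q+1$ lines of $\mathcal{L}$ lie in $\pi$. But by Lemma~\ref{lem:spread_partition} the set $\mathcal{L}$ is a union of $q+1$ line spreads of $S$, and a line spread of $\PG(3,q)$ contains at most one line of any fixed plane (two coplanar lines meet, while spread lines are pairwise disjoint), so $\mathcal{L}$ has at most $q+1$ lines in $\pi$; as $q\geq 2$ forces $2q+1>q+1$, this is the desired contradiction. I expect this coplanarity-versus-spread count — turning the pigeonhole on the $q+2$ points into the bound $2q+1$ and contrasting it with the per-plane bound $q+1$ inherited from the spread decomposition — to be the only genuinely delicate step, the rest being routine incidence geometry in $\PG(3,q)$.
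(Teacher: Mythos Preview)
Your proof is correct and follows the paper's overall architecture: verify axioms~(i)--(ii) and the order $(q,q)$ via the ambient $\PG(3,q)$ and Lemma~\ref{lem:pencil}, then reduce axiom~(iii) to the single claim $L\not\subseteq E_P$, after which existence and uniqueness of $(P',L')$ follow exactly as you wrote (and as the paper does). The one substantive difference is how you establish $L\not\subseteq E_P$. The paper argues directly from the spread decomposition: since $L\in\mathcal{L}_F$ for a unique $F\in\mathcal{F}$ and $\mathcal{L}_F$ is a spread of $S$, there is a line $L''\in\mathcal{L}_F$ through $P$; by Lemma~\ref{lem:pencil} $L''\subseteq E_P$, so if also $L\subseteq E_P$ then $L$ and $L''$ would be two distinct coplanar (hence meeting) lines of the spread $\mathcal{L}_F$, a contradiction. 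Your route instead counts: the $q+2$ points $P,Q_0,\dots,Q_q$ all have their $E$-plane containing $L$, while only $q+1$ planes of $S$ contain $L$, so pigeonhole forces $E_A=E_B=\pi$ for some $A\neq B$, whence two full pencils in $\pi$ put $2q+1$ lines of $\mathcal{L}$ into $\pi$, contradicting the bound $q+1$ coming from the fact that each of the $q+1$ spreads $\mathcal{L}_F$ can contribute at most one line to a fixed plane. Both arguments hinge on Lemma~\ref{lem:spread_partition}; the paper's is shorter and uses only the single spread $\mathcal{L}_F$ containing $L$, while yours is a slightly more global counting argument that avoids singling out the particular $F$ but needs the extra observation $X\in E_X$.
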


\begin{proof}
	Clearly, every line in $\mathcal{L}$ contains $q+1$ points in $S$.
	By Lemma~\ref{lem:spread_partition}, through every point in $S$ there pass $q + 1$ lines in $\mathcal{L}$.
	Now let $P$ be a point in $S$ and $L\in\mathcal{L}$ not containing $P$.

	By Lemma~\ref{lem:LF_disjoint}, there is a unique $F\in\mathcal{F}$ with $L\in\mathcal{L}_F$, and there is a line $L''\in\mathcal{L}_F$ passing through $P$.
	By Lemma~\ref{lem:pencil}, $L'' < E_P$, so we get $L \not < E_P$ as otherwise $L$ and $L''$ would be distinct intersecting lines in the spread $\mathcal{L}_F$.
	Moreover, $L$ and $E_P$ are both contained in $S$, so they cannot have trivial intersection.
	Therefore $L \cap E_P$ is a point.

	Now let $P'\in\qbinom{S}{1}{q}$ and $L'\in\mathcal{L}$ with $L \cap L' = P'$ and $P + P' = L'$.
	Then $L'$ is a line through $P$, so $L' < E_P$.
	So necessarily $P' = E_P \cap L$ and $L' = P + P'$, showing that $P'$ and $L'$ are unique.

	By Lemma~\ref{lem:pencil} indeed $L'\in\mathcal{L}$, as $P + P'$ is a line in $E_P$ containing $P$.
	This shows that $P'$ and $L'$ do always exist and therefore, the incidence structure $(\qbinom{S}{1},\mathcal{L})$ is a generalized quadrangle of order $(q,q)$.
\end{proof}

\begin{lemma}
	\label{lem:w}
	$(\qbinom{S}{1}{q},\mathcal{L})$ is isomorphic to $W(q)$.
\end{lemma}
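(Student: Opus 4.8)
The plan is to recognize $Q := (\qbinom{S}{1}{q},\mathcal{L})$ as a projective generalized quadrangle whose point set is the whole of $\PG(3,q)$, and then to quote the classification of such objects.

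First I would record what the preceding lemmas already give. By Lemma~\ref{lem:gq}, $Q$ is a generalized quadrangle of order $(q,q)$ whose lines are genuine lines of $\PG(S)\cong\PG(3,q)$ and whose incidence is the one induced from $\PG(3,q)$; hence $Q$ is projective in the sense of Section~\ref{subsect:gq}. By Lemmas~\ref{lem:spread_partition} and~\ref{lem:pencil}, every point of $S$ lies on exactly $q+1$ lines of $\mathcal{L}$, so the point set of $Q$ is all of $\qbinom{S}{1}{q}$. In particular $Q$ has $(q+1)(q^2+1)=q^3+q^2+q+1=\#\qbinom{S}{1}{q}$ points, so its points span $\PG(3,q)$.

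Next I would check non-degeneracy, which is needed to apply the classification. Since $q\ge 2$, a point of $Q$ lies on only $q+1$ lines, each carrying $q$ further points, so it is collinear with only $(q+1)q$ of the remaining $q^3+q^2+q$ points of $Q$; hence no point is collinear with all others, and $Q$ is non-degenerate. Now I invoke the classification of non-degenerate projective generalized quadrangles \cite[Th.~1]{Buekenhout-Lefevre-1974-ArchMath25[1]:540-552} (see also \cite[4.4.8]{Payne-Thas-2009-FiniteGeneralizedQuadrangles2nd}): $Q$, together with its embedding, is one of the classical generalized quadrangles. Among these, the only one of order $(q,q)$ whose ambient projective space is $\PG(3,q)$ is the symplectic quadrangle $W(q)$; equivalently, a generalized quadrangle whose point set is all of $\PG(3,q)$ has its line set equal to a linear complex of lines, i.e. the totally isotropic lines of a symplectic polarity. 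Thus $Q\cong W(q)$, and concretely the map $P\mapsto E_P=\beta(P)\cap S$ of Lemma~\ref{lem:pencil}, sending each point of $S$ to the plane containing the $q+1$ lines of $\mathcal{L}$ through it, is the corresponding symplectic polarity.

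I do not expect a serious obstacle: the real content has already been extracted in Lemmas~\ref{lem:spread_partition}, \ref{lem:pencil} and~\ref{lem:gq}. The only points requiring care are verifying that the embedding into $\PG(3,q)$ is full (all points of each line of $Q$ are points of $Q$, incidence induced) and that $Q$ is non-degenerate — both immediate from those lemmas and the point count above — after which the identification with $W(q)$ is forced by the ambient dimension being $3$ and the order being $(q,q)$. If one preferred a self-contained argument, one could instead show directly that $P\mapsto E_P$ is a polarity arising from an alternating bilinear form and that $\mathcal{L}$ consists precisely of its isotropic lines, but invoking Buekenhout--Lef\`evre is much shorter.
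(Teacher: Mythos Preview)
Your argument is correct and follows essentially the same route as the paper: invoke Lemma~\ref{lem:gq} to get a projective generalized quadrangle of order $(q,q)$ in $\PG(S)\cong\PG(3,q)$, apply the Buekenhout--Lef\`evre classification, and identify $W(q)$ by matching the order and ambient dimension. You are simply more explicit than the paper about verifying non-degeneracy and fullness of the embedding, and your remark that $P\mapsto E_P$ realizes the symplectic polarity is a nice addition not spelled out in the original.
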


\begin{proof}
	By Lemma~\ref{lem:gq} we know that $Q = (\qbinom{S}{1}{q},\mathcal{L}, \subseteq)$ is a finite generalized quadrangle of order $(s,t) = (q,q)$ embedded in $\PG(S)$.
	By the classification in~\cite[Theorem~1]{Buekenhout-Lefevre-1974-ArchMath25[1]:540-552} (see also \cite[4.4.8]{Payne-Thas-2009-FiniteGeneralizedQuadrangles2nd}), we know that $Q$ is a finite classical generalized quadrangle which are listed in \cite[3.1.2]{Payne-Thas-2009-FiniteGeneralizedQuadrangles2nd}.
	Comparing the orders and the dimension of the ambient geometry, the only possibility for $Q$ is the symplectic generalized quadrangle $W(q)$.
\end{proof}

Now we can proof our main result.

\begin{proof}[{{Proof of Theorem \ref{thm:partition}}}]
	Part~\ref{thm:partition:w} is a direct consequence of Lemma~\ref{lem:w} and Lemma~\ref{lem:spread_partition}.
	The equivalence of Part~\ref{thm:partition:w} and~\ref{thm:partition:q} has already been discussed in Section~\ref{subsect:gq}.
\end{proof}

Theorem~\ref{thm:alpha_points_block} is now a direct consequence.

\begin{proof}[{{Proof of Theorem \ref{thm:alpha_points_block}}}]
	We show that the statement in Theorem~\ref{thm:partition}\ref{thm:partition:q} is not satisfied.

	For $q$ prime, the only ovoids of $Q(4,q)$ are the elliptic quadrics $Q^{-}(3,q)$ \cite[Cor.~1]{Ball-Govaerts-Storme-2006-DCC38[1]:131-145}.
	As any two such quadrics have nontrivial intersection, there is no partition of $Q(4,q)$ into ovoids.

	For $q$ even, $Q(4,q)$ does not admit a partition into ovoids by \cite[3.4.1~(i)]{Payne-Thas-2009-FiniteGeneralizedQuadrangles2nd}.
\end{proof}

\section*{Acknowledgement}
I would like to thank Jan de Beule for his patient help with the theory of generalized quadrangles during a research stay at the Vrije Universiteit Brussel (VUB) in the framework of WOG Coding Theory and Cryptography.

\printbibliography
\end{document}